\newcommand{\CM}{Cohen-Macaulay}
\newcommand{\n}{\mathfrak{n} }
\newcommand{\m}{\mathfrak{m} }
\newcommand{\ZZ}{\mathbb{Z} }
\newcommand{\rt}{\rightarrow}
\newcommand{\ov}{\overline}
\newcommand{\bu}{\mathbf{u}}
\newcommand{\divS}{\operatorname{div}}
\newcommand{\Ass}{\operatorname{Ass}}
\newcommand{\grade}{\operatorname{grade}}
\newcommand{\height}{\operatorname{height}}
\newcommand{\cdim}{\operatorname{cdim}}
\theoremstyle{plain}
\newtheorem{theorem}{Theorem}[section]
\newtheorem{lemma}[theorem]{Lemma}
\newtheorem{proposition}[theorem]{Proposition}
\theoremstyle{definition}
\newtheorem{remark}[theorem]{Remark}
\newtheorem{example}[theorem]{Example}
\theoremstyle{remark}
\begin{document}

\title{Asymptotic primes and the Chow group }
 \author{Tony J. Puthenpurakal}
\date{\today}
\address{Department of Mathematics, Indian Institute of Technology Bombay, Powai, Mumbai 400 076, India}
\email{tputhen@math.iitb.ac.in}
\begin{abstract}
In this paper we present an unexpected connection between the theory of asymptotic prime divisors and Chow groups. 
As an application we show that the Chow group $A_1(R)$ is a torsion group when $R$ is any graded ring such  that we have an inclusion
of graded rings $T \subseteq R \subseteq S$ where 
\begin{enumerate}
 \item 
 $S = k[X_1, X_2, Y]/(X_1^m + X_2^m + Y^n)$ where $k$ is algebraically field, \\ $(m,n) = 1$, $(mn)^{-1} \in k$, 
$m, n \geq 2$. We consider $S$ graded with $\deg X_1  = \deg X_2 = n$ and $\deg Y = m$.
\item
$T = k[X_1, X_2]$ where $\deg X_1 = \deg X_2 = n$.
\end{enumerate}
We also consider higher dimensional analogues of this result.
\end{abstract}

\maketitle

\section{introduction}
Let 
$A$ be a Noetherian ring and let $M$ be a finitely generated $A$-module. Let $\Ass_A (M)$ denote the set of associate primes of $M$.
A basic result in commutative algebra is that $\Ass_A(M)$ is a finite set. Set
\[
 \Ass^{(i)}_A(M) = \{ P \mid P \in \Ass_A(M) \ \text{and} \ \height P = i \}.
\]

The first result in the theory of asymptotic primes is due to Broadmann \cite{Brod}. It states  that if $I$ is an ideal in $A$ then 
\[
 \bigcup_{n \geq 1} \Ass_A M/I^n M   \quad \text{is a finite set}.
\]
This result has a lot of applications. Historically the set $\Ass_A A/\ov{I^n}$ was considered where $\ov{I^n}$ is the integral 
closure of $I^n$. Ratliff, see \cite{Rat}, showed that $\Ass_A A/\ov{I^n} \subseteq \Ass_A A/\ov{I^{n+1}}$ for all $n \geq 1$ and that
\[
 \bigcup_{n \geq 1} \Ass_A A/\ov{I^n} \quad \text{is a finite set}.
\]
A good reference for these results is McAdam \cite{Mc}.

This paper arose due to my investigation of a different class of asymptotic primes which I now describe.
 Let $(A,\m)$ either be a Noetherian local ring  with maximal ideal $\m$ or a standard graded algebra over a field with  its irrelevant graded maximal ideal $\m$. Let  $x \in  \m^s \setminus \m^{s+1}$ be an $A$-regular element (it is homogeneous if $A$ is graded). Set
\[
A_s(x) = \sharp \Ass_A^{(1)} A/(x).
\]
Here $\sharp T$ denotes the number of elements in a set $T$.
Set
\[
A(s) = \{ A_s(x) \mid x \in \m^s \setminus \m^{s+1} \ \text{is $A$-regular} \}.
\]
Our first question is whether $A(s)$ is a bounded set. Set $\alpha_A(s) = \sup A(s)$.
In this regard our first result  is that if the associated graded ring 
$G_\m(A) = \bigoplus_{n \geq 0}\m^n/\m^{n+1}$ is a domain then $\alpha_A(s) \leq s e_0(A)$(here
$e_0(A)$ is the multiplicity of $A$). This motivates the question
\[
\text{Is} \ \alpha(A) = \limsup \frac{\alpha_A(s)}{s}  < \infty? 
\]
Of course if $G_\m(A)$ is a domain then $\alpha(A) \leq e_0(A)$.

If the residue field $A/\m$ is infinite and $\dim A \geq 2$ then it is not difficult to prove $\alpha(A) \geq 1$. We show
\begin{proposition}\label{ufd}
If $(A,\m)$ is a UFD  of dimension $d \geq 2$ and $G_\m(A)$ is a domain then $\alpha(A) = 1$. 
\end{proposition}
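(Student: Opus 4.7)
The strategy is to exploit the UFD hypothesis to identify $A_s(x)$ with the number of distinct prime factors of $x$, and then to use the fact that $G_\m(A)$ is a domain to bound that number by $s$. The matching lower bound $\alpha(A)\ge 1$ has already been noted just before the proposition, so everything reduces to producing the upper bound $\alpha(A)\le 1$.

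Let $x \in \m^s \setminus \m^{s+1}$ be an $A$-regular element. Since $A$ is a UFD, factor $x = u\,p_1^{a_1} \cdots p_k^{a_k}$ with $u \in A^\times$, pairwise non-associate prime elements $p_1, \ldots, p_k \in \m$, and exponents $a_i \geq 1$. Because $x$ is a nonzerodivisor, $(x)$ has height one by Krull's principal ideal theorem, so every height-one prime in $\Ass_A A/(x)$ is a minimal prime of $(x)$; unique factorization identifies the minimal primes of $(x)$ with $(p_1), \ldots, (p_k)$, giving $A_s(x) = k$. Writing $p_i \in \m^{b_i} \setminus \m^{b_i+1}$ with each $b_i \geq 1$, and using that $G_\m(A)$ is a domain (so products of nonzero initial forms are nonzero, i.e.\ the $\m$-adic order is multiplicative), iteration yields
\[
s \;=\; \sum_{i=1}^{k} a_i b_i \;\geq\; \sum_{i=1}^{k} 1 \;=\; k \;=\; A_s(x).
\]
Hence $\alpha_A(s) \leq s$ for every $s \geq 1$, so $\alpha(A) \leq 1$. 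Together with $\alpha(A)\ge 1$, this gives $\alpha(A) = 1$.

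The argument is short once both hypotheses are in play, and the main conceptual step is the translation of the associated-prime count into the combinatorics of a prime factorization. The only subtlety I foresee is that the lower bound $\alpha(A)\ge 1$ was stated under the extra assumption that $A/\m$ is infinite. In the UFD setting this can be handled either by treating infinite residue field as an implicit assumption, or by a direct construction: any Noetherian local ring of dimension $\ge 2$ has infinitely many height-one primes (by prime avoidance applied to $\m$), which in a UFD are principal, and selecting $s$ such prime generators of suitably small $\m$-adic order produces an $x$ realizing $A_s(x)/s$ arbitrarily close to $1$. That bookkeeping is the only place where I would expect nontrivial work in a fully detailed writeup.
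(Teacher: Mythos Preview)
Your argument is correct and follows essentially the same route as the paper: factor $x$ into primes, identify $\Ass_A^{(1)} A/(x)$ with the set of distinct prime factors, and use that $G_\m(A)$ is a domain to make the $\m$-adic order multiplicative so that the number of prime factors is at most $s$. Your observation about the infinite residue field is apt: the paper's detailed proposition in fact carries that hypothesis explicitly, so the version of Proposition~\ref{ufd} stated in the introduction is proved there only under that extra assumption.
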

A natural question is what happens if $\alpha(A) = 1$. Although we are unable to solve this question, some 
elementary results led us to believe that the Chow groups will play  an important role in this endeavor.
Chow groups are important invariants  of a ring. See section \ref{sect-Chow} for definition of Chow groups.
Their utility in commutative algebra has been aptly demonstrated by Roberts, see \cite{Rob}.

Our first result is:
\begin{theorem}\label{dim2}
Let $A = \bigoplus_{n\geq 0} A_n$ be a finitely genrated graded domain over an algebraically closed field $k = A_0$. Note $A$ need not be standard graded. Assume $\dim A = 2$. Further assume
\begin{enumerate}[\rm (1)]
\item
There exists a Noether normalization $T = k[X_1, X_2]$  of $A$ with $\deg X_1 = \deg X_2 = m$.
\item
For every $\xi \in T_m$ with $\xi \neq 0$ we have $\sharp \Ass^{(1)}_A A/\xi A = 1$.
\end{enumerate}
If $R$ is a graded ring such that we have an inclusion of graded rings $T \subseteq R \subseteq A$ then  $A_1(R)$, the first Chow group of $R$, is a torsion group.
\end{theorem}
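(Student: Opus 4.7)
The plan is to use hypothesis (2) to pin down the divisor of every linear form $\xi \in T_m$ inside $R$, deduce that every graded height-one prime of $R$ has torsion class in $A_1(R)$, and then pass from graded to arbitrary height-one primes via a flat $\mathbb{G}_m$-degeneration argument.

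First I would transfer hypothesis (2) from $A$ to $R$. Fix a nonzero $\xi \in T_m$. By hypothesis there is a unique height-one prime $\mathfrak{P}_\xi \subset A$ containing $\xi$, and $\divS_A(\xi) = n_\xi [\mathfrak{P}_\xi]$ for some $n_\xi \geq 1$. Set $\mathfrak{q}_\xi := \mathfrak{P}_\xi \cap R$. The key claim is that $\mathfrak{q}_\xi$ is the unique height-one prime of $R$ containing $\xi$: any such $\mathfrak{q}'$ contracts to the height-one prime $(\xi)$ of the UFD $T$, and by lying-over it lifts to some $\mathfrak{P}' \subset A$ with $\mathfrak{P}' \cap T = (\xi)$; since $T \subseteq A$ is finite and $T$ is normal, heights are preserved under contraction to $T$, so $\height \mathfrak{P}' = 1$, which forces $\mathfrak{P}' = \mathfrak{P}_\xi$ and hence $\mathfrak{q}' = \mathfrak{q}_\xi$. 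Thus $\divS_R(\xi) = m_\xi [\mathfrak{q}_\xi]$ for some $m_\xi \geq 1$, so $[\mathfrak{q}_\xi]$ is $m_\xi$-torsion in $A_1(R)$. Moreover, for any graded height-one prime $\mathfrak{q}$ of $R$, its contraction to $T$ is a graded height-one prime of $k[X_1, X_2]$; since $k$ is algebraically closed, every irreducible homogeneous polynomial in two variables is linear, so this contraction equals $(\xi)$ for some $\xi \in T_m$, whence $\mathfrak{q} = \mathfrak{q}_\xi$ is torsion.

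It remains to reduce an arbitrary height-one prime $\mathfrak{q}$ of $R$ to graded ones. I would exploit the $\mathbb{G}_m$-action on $\mathrm{Spec}(R)$ coming from the grading: a Rees-type construction produces a flat family over $\mathbb{A}^1$ whose generic fiber is $\mathrm{Spec}(R/\mathfrak{q})$ and whose special fiber at $t = 0$ is the cycle attached to the graded initial ideal $\mathfrak{q}^{\ast}$. Rational equivalence of cycles in the family yields $[\mathfrak{q}] = \sum_i c_i [\mathfrak{q}_i]$ in $A_1(R)$ with each $\mathfrak{q}_i$ a graded height-one prime, and all of these are torsion by the previous step. Consequently $A_1(R)$ is generated by torsion elements and is therefore torsion. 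I expect this last reduction to be the main technical obstacle, since implementing it as a genuine rational equivalence in $A_1(R)$ (as defined in Section~\ref{sect-Chow}) needs care: $R$ is not assumed normal or flat over $T$, so one must construct the degeneration by hand rather than invoke standard equivariant-Chow machinery.
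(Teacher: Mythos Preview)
Your proposal is correct, and steps 1--2 are close in spirit to the paper's argument. In fact your step 2 is slightly more direct: the paper localizes at $X_2$, writes $T_{X_2} = B[X_2, X_2^{-1}]$ with $B = k[X_1/X_2] \cong k[t]$, and uses a lemma on graded primes of $S[X,X^{-1}]$ to show that every graded height-one prime of $R$ not containing $X_2$ contains some $X_1 - cX_2$; you reach the same conclusion by contracting straight to $T$ and invoking the linear factorization of two-variable homogeneous forms over an algebraically closed field. Both routes land on the same fact, namely that each graded height-one prime of $R$ is the unique minimal prime over some $\xi \in T_m$, hence has torsion class.

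Where you diverge substantially is step 3, and here you are working much harder than necessary. The paper disposes of the passage from graded to arbitrary height-one primes in one stroke (Proposition~\ref{ht1}): invert the multiplicative set $S$ of all nonzero homogeneous elements of the domain $R$; then $R_S \cong L[t,t^{-1}]$ for a field $L$, which is a UFD, so $A_1(R_S) = 0$, and the localization exact sequence forces $A_1(R)$ to be generated by $[R/P]$ with $P$ graded of height one. Equivalently, the natural map $A_1^*(R) \to A_1(R)$ is surjective. Your Rees/$\mathbb{G}_m$-degeneration idea is morally the same specialization-to-the-associated-graded, but implementing it as an honest rational equivalence in the sense of Section~\ref{sect-Chow} is genuinely delicate, exactly as you feared. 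The localization trick sidesteps that obstacle entirely, so I would recommend replacing your step 3 with it.
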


Although the hypotheses of Theorem \ref{dim2} look a bit contrived it is surprisingly effective for a large class of examples, for instance consider the following two:
\begin{enumerate}
\item
$A = k[X, Y, Z]/(X^n + Y^m + Z^n)$ where $k$ is algebraically closed, $m, n \geq 2$; $(m,n) = 1$ and $(mn)^{-1} \in k$. Grade $A$ by setting $\deg X = \deg Z = m$ and $\deg Y = n$. Note $T = k[X, Z]$ is a Noether normalization of $A$.
\item
$A = k[X, Y, Z]/(X^n + Y^m + X^aZ^b)$ where $k$ is algebraically closed, $m, n \geq 2$; $(m,n) = 1$, $a+b = n$ and $(mn)^{-1} \in k$. Also assume $a,b \geq 1$. Grade $A$ by setting $\deg X = \deg Z = m$ and $\deg Y = n$.  Note $T = k[X, Z]$ is a Noether normalization of $A$.
\end{enumerate}

It is very tempting to try and generalize Theorem \ref{dim2} to higher dimensions.
For this to work we have to work with a variant of Chow groups which is more suited for graded rings. 
We essentially consider only graded prime ideals and consider rational equivalence defined by going mod
homogeneous elements. See section \ref{sect-Chow} for this definition. Theorem \ref{dim-d} does not directly generalize. Our 
motivation was to ensure that generalizations of the above examples work in higher dimensions. 
\begin{theorem}\label{dim-d}
Let $A = \bigoplus_{n\geq 0} A_n$ be a finitely generated graded algebra over an algebraically closed field 
$k = A_0$. Note $A$ need not be standard graded. Assume $\dim A  = d \geq 2$ and that $A$ is \CM.. Further assume
\begin{enumerate}[\rm (1)]
\item
There exists a Noether normalization $T = k[X_1, \cdots, X_d]$  of $A$ with $\deg X_1 = \cdots = \deg X_d = m$.
\item
For every $T$-regular 
sequence $u_1,\ldots, u_{d-1}\in T_m$, 
we have \\ $\sharp \Ass^{(d-1)}_A A/(u_1,\ldots, u_{d-1}) A = 1$.
\end{enumerate}
If $R$ is a graded ring such that we have an inclusion of graded rings $T \subseteq R \subseteq A$
and $\grade(T_+, R) \geq d-1$ then  $A_{1}^*(R)$, the first  graded Chow group of $R$, is a torsion group.
\end{theorem}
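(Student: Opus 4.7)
The natural approach is to reduce Theorem~\ref{dim-d} to Theorem~\ref{dim2} by cutting with $d-2$ generic linear forms in $T_m$. Since $A$ is Cohen--Macaulay and $T$ is a polynomial ring, the Auslander--Buchsbaum formula forces $A$ to be a graded-free $T$-module; in particular, every $T$-regular sequence is automatically $A$-regular. The hypothesis $\grade(T_+,R) \geq d-1$, together with the fact that $T_m = kX_1 \oplus \cdots \oplus kX_d$ generates $T_+$ as an ideal, yields (via a standard prime-avoidance argument on the finitely many relevant associated primes of $T$, $R$ and $A$) a Zariski-dense family of $(d-1)$-tuples $(u_1,\ldots,u_{d-1}) \in T_m^{d-1}$ that are simultaneously $T$-, $R$- and $A$-regular. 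Fix such a tuple and set
\[
\bar T = T/(u_1,\ldots,u_{d-2})T, \qquad \bar R = R/(u_1,\ldots,u_{d-2})R, \qquad \bar A = A/(u_1,\ldots,u_{d-2})A,
\]
so that $\bar T \cong k[Y_1,Y_2]$ with $\deg Y_i = m$, and we obtain a chain $\bar T \subseteq \bar R \subseteq \bar A$ in which $\bar A$ is two-dimensional Cohen--Macaulay.

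Next I would verify the hypotheses of Theorem~\ref{dim2} for this chain. Condition (1) is clear. For condition (2), given a nonzero $\xi \in \bar T_m$, lift it to $u_{d-1} \in T_m$ so that $u_1,\ldots,u_{d-1}$ is $T$-regular; the standard bijection between $\Ass^{(1)}_{\bar A}(\bar A/\xi\bar A)$ and $\Ass^{(d-1)}_A(A/(u_1,\ldots,u_{d-1})A)$ then gives $\sharp \Ass^{(1)}_{\bar A}(\bar A/\xi\bar A) = 1$ from the hypothesis. I would then argue that $\bar A$ is a graded domain: if $\bar A$ had two distinct minimal primes $P_1, P_2$, each $P_i \cap \bar T_m$ would be a proper $k$-subspace of the $2$-dimensional space $\bar T_m$, and since $k$ is algebraically closed the union of two proper subspaces cannot exhaust $\bar T_m$, so some $\xi \in \bar T_m$ avoids both $P_i$ and produces two distinct height-$1$ associated primes in $\bar A/\xi\bar A$, contradicting condition (2); reducedness of $\bar A$ (and of $\bar R$) follows from a Bertini-type genericity argument on the $u_i$'s. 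Theorem~\ref{dim2} then yields that $A_1(\bar R)$ is a torsion group.

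Finally I would lift this torsion back to $A_1^*(R)$. The surjection $R \twoheadrightarrow \bar R$ induces a closed immersion $\operatorname{Spec}\bar R \hookrightarrow \operatorname{Spec} R$ and a proper pushforward $\phi_*\colon A_1(\bar R) \to A_1^*(R)$ which sends torsion to torsion, because every two-dimensional graded prime $Q$ of $\bar R$ lifts to a two-dimensional graded prime $Q' \supseteq (u_1,\ldots,u_{d-2})$ of $R$ with $\bar R/Q \cong R/Q'$, so the defining relations of $A_1(\bar R)$ are realized as a subset of the defining relations of $A_1^*(R)$. Consequently $[R/\fp]$ is torsion in $A_1^*(R)$ for every dim-$1$ graded prime $\fp \supseteq (u_1,\ldots,u_{d-2})$. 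The main obstacle is the remaining case: a dim-$1$ graded prime $\fp$ for which no $(d-2)$-tuple in $T_m$ lies in $\fp$, which happens precisely when $\fp \cap T$ parametrizes a curve in $\operatorname{Spec} T$ not contained in any codimension-$(d-2)$ linear subspace. For such $\fp$ my plan is a pencil/degeneration argument: pick a homogeneous $g \in \fp \cap T$ of some degree $\ell m$, degenerate it in a one-parameter family to a product $u_1'\cdots u_\ell'$ of elements of $T_m$, and use the resulting rational equivalence in $A_1^*(R)$ to express $[R/\fp]$ as a $\mathbb{Z}$-linear combination of classes $[R/\fp_i]$ whose primes each contain a linear form and are therefore handled by the main reduction. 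This last step is what I expect to be the most delicate part of the proof.
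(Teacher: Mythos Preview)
Your reduction-to-dimension-two strategy has two real problems.

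The decisive one is the last step, which you yourself flag as delicate. The pencil/degeneration idea does not fit the rational-equivalence framework used here: the only relations available in $A_1^*(R)$ are $\divS(Q,x)=0$ for a fixed graded prime $Q$ with $\dim R/Q = 2$ and a fixed homogeneous $x \notin Q$, and there is no mechanism for ``degenerating'' an element $g \in \fp\cap T$ inside this group. More importantly, the obstacle you anticipate does not actually exist. The paper inverts a \emph{single} element $\xi \in T_m$ (chosen $R/T \oplus A/R$-regular, so that the inclusions $T/(\xi) \subseteq R/\xi R \subseteq A/\xi A$ persist) rather than cutting by $d-2$ elements at once. After inverting $\xi$ one has $T_\xi = B[\xi,\xi^{-1}]$ with $B = k[\xi_2/\xi,\ldots,\xi_d/\xi] \cong k[Y_2,\ldots,Y_d]$; for any homogeneous height-$(d-1)$ prime $P$ of $R$ with $\xi \notin P$, Lemma~\ref{inter} forces the contraction $P R_\xi \cap B$ to be a maximal ideal of $B$. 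Since $k$ is algebraically closed this ideal has the form $(\xi_2/\xi - c_2,\ldots,\xi_d/\xi - c_d)$, and clearing denominators yields $d-1$ elements $\xi_i - c_i\xi \in P \cap T_m$. Thus \emph{every} such $P$ already contains a full $T$-regular sequence of length $d-1$ from $T_m$; taking $Q$ to be a minimal prime of $(\xi_2 - c_2\xi,\ldots,\xi_{d-1}-c_{d-1}\xi)R$ contained in $P$ and setting $x = \xi_d - c_d\xi$, hypothesis~(2) forces $\divS(Q,x) = n[R/P]$ with $n \geq 1$. The primes containing $\xi$ are handled by induction on $d$ via the localization sequence and a surjection $A_1^*(R/\xi R) \twoheadrightarrow H$ (Lemma~\ref{H}), not by a single jump to dimension two.

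A secondary issue is your appeal to Theorem~\ref{dim2}: that theorem requires $A$ to be a domain, and your argument that $\bar A$ is a domain leans on a ``Bertini-type'' reducedness claim for a generic linear section which is neither stated precisely nor proved, and which is delicate in positive characteristic. The paper avoids this entirely by running its own $d=2$ base case inside the inductive proof of Theorem~\ref{dim-d}, one that does not assume $R$ or $A$ is a domain: instead of $\divS(0,\xi)$ one uses $\divS(Q',\xi)$ for a minimal prime $Q'$ of $R$ contained in the height-one prime of interest.
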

We now extend the previous examples to give examples where our result holds:
\begin{enumerate}
\item
$A = k[X_1, X_2, \ldots, X_d, Y ]/( Y^m + \sum_{j = 1}^{d} X_j^n)$ where $k$ is algebraically closed, $m, n \geq 2$; $(m,n) = 1$ and $(mn)^{-1} \in k$. Grade $A$ by setting $\deg X_1 = \cdots =  \deg X_d = m$ and $\deg Y = n$. Note $T = k[X_1,\ldots, X_{d}]$ is a Noether normalization of $A$.
\item
$A = k[X, Y, Z, W_3, W_4, \ldots, W_d]/(X^n + Y^m + X^aZ^b + \sum_{j = 3}^{d} W_j^n)$ where $k$ is algebraically closed, $m, n \geq 2$; $(m,n) = 1$, $a+b = n$ and $(mn)^{-1} \in k$. Also assume $a,b \geq 1$. Grade $A$ by setting $\deg X = \deg Z = \deg W_3 = \cdots = \deg W_d = m$, $\deg Y = n$.  Note $T = k[X, Z, W_3, \cdots, W_d]$ is a Noether normalization of $A$.
\end{enumerate}

\begin{remark}
We note that even in dimension two;  previously the first Chow groups $A_1(A)$ was not known in the examples above.
\end{remark}

\begin{remark}
 Note that in dimension two the hypotheses of Theorem \ref{dim2} and Theorem \ref{dim-d}(in dimension 2) are different. It just 
 so happens that our examples satisfies the hypotheses of both the Theorems.
 \end{remark}

\section{Preliminaries}
In this section we  prove some preliminary results. We also show that certain polynomials are irreducible. 

We first prove:
\begin{lemma}\label{dom}
Let $(A,\m)$ be a Noetherian local ring of dimension $d$ with associated graded ring $G_\m(A)$ a domain. Let $x \in \m^s \setminus \m^{s+1}$. Then
$\Ass^{(1)}_A A/(x) \leq se_0(A)$.
\end{lemma}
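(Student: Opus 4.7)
The plan is to combine regularity of the initial form $x^* \in G_\m(A)_s$ with the associativity formula for multiplicity. Since $G_\m(A)$ is a domain, $A$ is a domain (a relation $ab = 0$ with $a, b \neq 0$ would force $a^*b^* = 0$ in $G_\m(A)$), and the nonzero class $x^* \in \m^s/\m^{s+1}$ is a nonzerodivisor, so that $G_\m(A/xA) \cong G_\m(A)/(x^*)$. Writing $H_{G_\m(A)}(t) = Q(t)/(1-t)^d$ with $Q(1) = e_0(A)$, the short exact sequence of graded modules
$0 \to G_\m(A)(-s) \xrightarrow{x^*} G_\m(A) \to G_\m(A)/(x^*) \to 0$ gives
\[
H_{G_\m(A/xA)}(t) \;=\; (1-t^s)\,\frac{Q(t)}{(1-t)^d} \;=\; \frac{(1+t+\cdots+t^{s-1})\,Q(t)}{(1-t)^{d-1}},
\]
whence $e_0(A/xA) = s\cdot e_0(A)$.

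Next I would apply the associativity formula for multiplicity,
\[
e(\m, A/xA) \;=\; \sum_{P}\, \ell_{A_P}\bigl((A/xA)_P\bigr)\, e(\m, A/P),
\]
where the sum runs over the minimal primes $P$ of $A/xA$ satisfying $\dim A/P = d-1$. Since each summand is a positive integer, there can be at most $s\cdot e_0(A)$ such primes.

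The one subtle point, which I regard as the main obstacle, is that $A$ is not assumed catenary, so a height-$1$ associated prime $P$ is not obviously one of those in the previous sum. I would sidestep this by passing to the completion. Since $G_{\m \hat A}(\hat A) = G_\m(A)$ is a domain, $\hat A$ is a complete local domain, and is in particular catenary. By faithful flatness of $A \to \hat A$, every $P \in \Ass^{(1)}_A(A/xA)$ admits a lift $\fP \in \Ass_{\hat A}(\hat A/x\hat A)$ with $\fP \cap A = P$ and $\height\fP = 1$; in the catenary domain $\hat A$ such a $\fP$ automatically satisfies $\dim \hat A/\fP = d-1$. Applying the preceding bound inside $\hat A$ (where $e_0(\hat A/x\hat A) = e_0(A/xA) = s\cdot e_0(A)$) yields
\[
\sharp\,\Ass^{(1)}_A(A/xA) \;\le\; \sharp\,\Ass^{(1)}_{\hat A}(\hat A/x\hat A) \;\le\; s\cdot e_0(A),
\]
as required.
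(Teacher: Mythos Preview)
Your proof is correct and follows the same route as the paper's: regularity of the initial form $x^*$ yields $e_0(A/xA)=s\,e_0(A)$, and then the associativity formula bounds the number of relevant primes. The paper simply omits your catenarity discussion; your detour through $\hat A$ works, but you can shorten it by noting that $G_\m(A)$ a domain forces $\hat A$ to be a domain, so $A$ is quasi-unmixed and hence universally catenary by Ratliff's theorem, whence every height-one prime of $A$ already satisfies $\dim A/P=d-1$.
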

\begin{proof}
We note that $A$ is a domain. Also note that $x^*$ the initial form of $x$ in $G_\m(A)$ is regular of order $s$. So $e_0(A/(x)) = se_0(A)$. We note that
$e_0(A/(x)) = \sum_{P}\lambda((A/(x))_P) e(A/P)$ where the sum is taken over all primes containing $(x)$ such that $\dim A/P = d-1$. The result follows.
\end{proof}
We now show
\begin{proposition}
 Let $(A,\m)$ be a Noetherian local ring of dimension $d \geq 2$ with $A/\m$ infinite. Assume $G_\m(A)$ is a domain.
 Then
 \begin{enumerate}[ \rm (1)]
  \item $\alpha(A) \geq 1$.
  \item If $A$ is also a UFD then $\alpha(A) = 1$. 
 \end{enumerate}
\end{proposition}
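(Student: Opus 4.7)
For part (1), the approach is to exhibit, for every $s \ge 1$, an $A$-regular element $x \in \m^s \setminus \m^{s+1}$ whose ideal has at least $s$ distinct minimal (height-one) primes; this immediately gives $\alpha_A(s) \ge s$ and hence $\alpha(A) \ge 1$. Since $G_\m(A)$ is a domain, $A$ is itself a domain and the order function $\operatorname{ord}$ on $A$ is additive on products. First I pick $y_1 \in \m \setminus \m^2$ nonzero and list the (necessarily height-one) minimal primes $P_1, \ldots, P_t$ of $(y_1)$. Using $\dim A \ge 2$ and a short Nakayama argument, none of the images $\overline{P_i}$ in $V := \m/\m^2$ equals all of $V$; since $k$ is infinite and $\dim_k V \ge 2$, the vector space $V$ is not covered by the proper $k$-subspaces $k\overline{y_1}$ and $\overline{P_i}$. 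I therefore select $y_2 \in \m$ whose class in $V$ avoids each of them, forcing $y_1^*, y_2^*$ to be $k$-linearly independent in $G_\m(A)_1$ and $y_2 \notin P_i$ for all $i$. An elementary argument then gives $\height(y_1, y_2) = 2$.

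With $y_1, y_2$ in hand, pick $s$ distinct scalars $\alpha_1, \ldots, \alpha_s \in k$ and set
\[
x \;=\; \prod_{i=1}^{s}(y_1 + \alpha_i y_2).
\]
Order additivity and the independence of $y_1^*, y_2^*$ give $\operatorname{ord}(x) = s$, so $x \in \m^s \setminus \m^{s+1}$; each factor is nonzero in the domain $A$, so $x$ is $A$-regular. If a height-one prime $P$ contained two distinct factors $y_1 + \alpha_i y_2$ and $y_1 + \alpha_j y_2$, then subtracting would place both $y_1, y_2 \in P$, contradicting $\height(y_1, y_2) = 2$. Hence the $s$ factors contribute $s$ pairwise distinct height-one minimal primes of $(x)$, establishing (1).

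For part (2), the plan is to match the lower bound with $\alpha_A(s) \le s$. Given $A$-regular $x \in \m^s \setminus \m^{s+1}$, the UFD hypothesis lets me write $x = u \prod_{i=1}^{r} p_i^{a_i}$ with $u$ a unit and $p_i \in \m$ pairwise non-associate prime elements. Every height-one prime of $A$ is principal, and localization at a height-one prime $(p)$ is a DVR, so $(p) \in \Ass_A(A/(x))$ iff $p$ divides $x$; hence $A_s(x) = r$. Order additivity then yields
\[
s \;=\; \operatorname{ord}(x) \;=\; \sum_{i=1}^{r} a_i\,\operatorname{ord}(p_i) \;\ge\; r,
\]
because $\operatorname{ord}(p_i) \ge 1$ for each $i$; combined with (1) this gives $\alpha(A) = 1$.

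The principal obstacle I anticipate is the construction of the pair $y_1, y_2$ with $\height(y_1, y_2) = 2$ in part (1) without assuming Cohen-Macaulayness of $A$ or $G_\m(A)$. Once this setup is secure, the rest reduces to Krull's principal ideal theorem, unique factorization, and additivity of $\operatorname{ord}$ in $G_\m(A)$.
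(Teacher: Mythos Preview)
Your proposal is correct and follows essentially the same strategy as the paper: for (1) you take a product of $s$ distinct linear combinations of two elements generating a height-two ideal, and for (2) you use unique factorization together with additivity of the order function. The only cosmetic difference is in how the pair is obtained---the paper simply picks $x,y$ as part of a system of parameters while you build $y_1,y_2$ via prime avoidance in $\m/\m^2$---but the core argument is identical.
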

\begin{proof}
We note that $A$ is a domain.
 Let $x,y \in \m \setminus \m^2$  be part of system of parameters of $A$.
 
 (1) Let $\lambda$ be a unit in $A$. Set $t_\lambda = x + \lambda y$. It is elementary to see that for general $\lambda$ we have that
 $t_\lambda \in \m \setminus \m^2$. As $A$ is a domain we get $t_\lambda$ is regular.
 
 Fix $s \geq 2$. Let $u_s = t_{\lambda_1}\cdot t_{\lambda_2} \cdot \cdots t_{\lambda_s}$ where $\lambda_i$ are distinct and general. Then $u_s$ is regular.
 Also as $G_\m(A)$ is a domain
 we have that $u_s \in \m^s \setminus \m^{s+1}$.  If $P$ is a height one prime containing $u$ then
 $P$ contains $t_{\lambda_i}$ for some $i$. If 
 $P$ contains $t_{\lambda_i}$ and $t_{\lambda_j}$ for $i \neq j$ then it is easy to see that $x,y \in P$. This is a contradiction.
 Thus
$  \Ass_A^{(1)} A/(u_s)$ is equal to the disjoint union of $\Ass^{(1)}_A  A/(t_{\lambda_i})$. 
So $\alpha_A(s) \geq s$. Thus $\alpha(A)  \geq 1$.

(2). Let $u \in \m^s \setminus \m^{s+1}$. Then as $A$ is a UFD we have that $u = u_1^{r_1}\cdots u_t^{r_t}$ where $u_j$ are irreducible.
Clearly $\sum_j r_j \leq s$ In particular $t \leq s$. Also $  \Ass_A^{(1)} A/(u) = \{ (u_1), \cdots, (u_t) \}$. Thus
$\alpha_A(s) \leq s$.
It follows that  $\alpha(A)  \leq 1$. By (1) we get $\alpha(A) = 1$.
\end{proof}

I believe the following  results are
already known. However lack of a reference has forced me to include it here.

\begin{lemma}\label{d1}
Let $k$ be an algebraically closed field and consider the polynomial ring $S = k[X,Y]$. Let $m,n$ be integers such that $m, n \geq 2, (m,n)= 1$ and $(mn)^{-1} \in k$. Then 
\begin{enumerate}[\rm (1)]
\item
The polynomial $f = Y^m - X^n$ is irreducible in $S$.
\item 
For every $\alpha \neq 0$, $\alpha \in k$; the polynomial $Y^m  + \alpha X^n$ is irreducible in $S$.
\end{enumerate}
\end{lemma}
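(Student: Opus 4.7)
My plan for part (1) is to work in $L[Y]$ with $L = \overline{k(X)}$, using the linear factorization of $Y^m - X^n$. Since $(mn)^{-1}\in k$ implies $\charp k \nmid m$, and $k$ is algebraically closed, $k$ contains a primitive $m$-th root of unity $\zeta$. After fixing any $\eta\in L$ with $\eta^m = X^n$, the polynomial $f = Y^m - X^n$ is separable and factors as $f = \prod_{i=1}^{m}(Y - \zeta^i\eta)$ in $L[Y]$. Because $f$ is monic in $Y$ it is primitive in $k[X][Y]$, so by Gauss's lemma it suffices to show irreducibility of $f$ in $k(X)[Y]$.

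Suppose $f = g\cdot h$ in $k(X)[Y]$ with both factors of positive $Y$-degree. Absorbing a scalar, one may take $g$ monic in $Y$ of some degree $d$ with $0 < d < m$. Since $L[Y]$ is a UFD, $g$ must be a subset product $\prod_{i\in I}(Y - \zeta^i\eta)$ with $|I| = d$. Inspecting its constant term gives $g(0) = (-1)^d \eta^d \prod_{i\in I}\zeta^i$; as $\prod_{i\in I}\zeta^i\in k^\times$ while $g(0)\in k(X)$, one would need $\eta^d\in k(X)$. I would rule this out by a short UFD argument: any $\xi\in k(X)$ satisfying $\xi^m = X^{nd}$ is, after a $\gcd$-clearing in $k[X]$, necessarily a polynomial, and the UFD structure of $k[X]$ (with $k$ algebraically closed) then forces $\xi$ to be a scalar multiple of a single power of $X$, hence $m\mid nd$. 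Combined with $\gcd(m,n)=1$ this yields $m\mid d$, contradicting $0<d<m$.

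For part (2), I would reduce directly to (1) by a change of variables. Since $k$ is algebraically closed, pick $\beta\in k$ with $\beta^n = -\alpha$; as $\alpha\neq 0$ we have $\beta\neq 0$. Then $Y^m + \alpha X^n = Y^m - (\beta X)^n$, and the $k$-algebra automorphism of $S$ defined by $X \mapsto \beta X$, $Y\mapsto Y$ carries $Y^m - X^n$ to $Y^m + \alpha X^n$, transferring irreducibility.

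The main obstacle I anticipate is the verification that $\eta^d\notin k(X)$ for $0<d<m$. Using other natural invariants of a subset product, such as the coefficient of $Y^{d-1}$, would require controlling sums of the form $\sum_{i\in I}\zeta^i$, which can vanish for suitable $I$; the constant-term computation above sidesteps this entirely because the product $\prod_{i\in I}\zeta^i$ of roots of unity is always a nonzero scalar, after which the elementary UFD computation in $k[X]$ finishes the argument.
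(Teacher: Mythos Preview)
Your argument is correct. For part (2) you and the paper do exactly the same change of variables. For part (1) the underlying idea is also the same---show that a putative nontrivial factor forces $\eta^d\in k(X)$ for some $0<d<m$, then use unique factorization in $k[X]$ together with $\gcd(m,n)=1$ to reach a contradiction---but the packaging differs. The paper reaches the condition $\beta^d\in K=k(X)$ by quoting a structure theorem from Lang on extensions generated by a root of $Y^m-a$, which directly yields $[K(\beta):K]=d$ with $d\mid m$ and $\beta^d\in K$, and then notes $(\beta^d)^{m/d}=X^n$ forces $m/d=1$. You instead factor $f$ completely over $\overline{k(X)}$ and read off $\eta^d\in k(X)$ from the constant term of a hypothetical monic factor, which is a nice elementary substitute for the cited theorem. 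The trade-off: the paper's proof is shorter once one is willing to invoke Lang, while yours is self-contained and makes explicit where the hypothesis $\operatorname{char} k\nmid m$ (giving a primitive $m$-th root of unity in $k$ and separability) enters.
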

\begin{proof}
(1) Let $K = k(X)$. It suffices to prove that $f$ is irreducible in $K[Y]$. Let $\beta$ be a root of $f$. Then by \cite[Theorem 6.2, Chapter 6]{L} we get that the degree of the field extension $K(\beta)$ over $K$ is $d$ where $d|m$ and $\beta^d \in K$.
Let  $m = m_1d$. Note $\beta^d = X^{\frac{n}{m_1}} \in K$. As $(m_1, n)= 1$ this forces $m_1 =1$. So $d = m$. This implies that $f$ is irreducible in $K[Y]$.

(2) Let $\gamma \in k$ be such that $\gamma^n = -\alpha$. The result follows from a change of variables $Y \mapsto Y$ and $X \mapsto \gamma^{-1}Z$.
\end{proof}

We need the following:
\begin{proposition}\label{Gen}
Let $k$ be an algebraically closed field and
let  \\ $S = k[X, Y, Z, W_3,\ldots, W_d]$ with $d \geq 2$ (if $d = 2$ then $S = k[X,Y,Z]$). Let $m,n$ be integers such that $m, n \geq 2, (m,n)= 1$ and $(mn)^{-1} \in k$. 
Also $a, b \geq 1$ are integers  with $a+b = n$. Then the following polynomials are irreducible in $S$.
\begin{enumerate}[\rm(1)]
\item
$f = X^n + Y^m + Z^n + \sum_{j = 3}^{d}W_j^n$.
\item
$g = X^n + Y^m + X^aZ^b   + \sum_{j = 3}^{d}W_j^n$.
\end{enumerate}
\end{proposition}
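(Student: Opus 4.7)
\medskip

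\noindent \textbf{Proof proposal.} My plan is to view both $f$ and $g$ as monic polynomials in the single variable $Y$ over the polynomial subring $R = k[X, Z, W_3, \ldots, W_d]$ (with $R = k[X,Z]$ when $d = 2$), and to apply the classical irreducibility criterion for $Y^m - c$ over $K = \operatorname{Frac}(R)$.

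First, since $R$ is a UFD and both $f, g$ are monic (hence primitive) in $Y$, Gauss's lemma reduces irreducibility in $S = R[Y]$ to irreducibility in $K[Y]$. Then by the standard criterion (Lang \cite[Ch.~VI, Thm.~9.1]{L}), $Y^m - c$ is irreducible over $K$ if and only if $c \notin K^p$ for each prime $p \mid m$, and if $4 \mid m$ also $c \notin -4K^4$. Writing $f = Y^m - (-h_1)$ and $g = Y^m - (-h_2)$, and using that $R$ is a UFD (so $K^p \cap R = R^p$) and that $k$ is algebraically closed (so $-1$ and $-4$ are $p$-th, resp.\ fourth, powers in $k$), this reduces to showing $h_1, h_2 \notin R^p$ for each prime $p \mid m$, and $h_1, h_2 \notin R^4$ whenever $4 \mid m$.

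For part (1), $h_1 = X^n + Z^n + \sum_{j=3}^d W_j^n$ has degree exactly $n$ in each variable; a $p$-th power in $R$ has degree divisible by $p$ in every variable, but $(m,n) = 1$ forces $p \nmid n$ for every prime $p \mid m$, and if $4 \mid m$ then $n$ is odd. This crude degree obstruction finishes~(1). The same argument, applied in the variable $W_3$, handles $h_2 = X^n + X^a Z^b + \sum_{j \geq 3} W_j^n$ whenever $d \geq 3$, and so also gives (2) in that range.

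The real obstacle is part (2) when $d = 2$, since then $h_2 = X^a(X^b + Z^b) \in k[X,Z]$ and the degree count fails. Here I would exploit that the two factors are coprime in $R$ ($X^b + Z^b$ evaluated at $X=0$ gives $Z^b \neq 0$), so $h_2 \in R^p$ would force each of $X^a$ and $X^b + Z^b$ to be a $p$-th power. The first condition gives $p \mid a$; to rule out the second, I split on $\charp k$. If $\charp k \nmid b$, then $X^b + Z^b$ is separable as a polynomial in $X$ over $k(Z)$ and hence squarefree in $R$, so it is not a $p$-th power. If $\charp k = q \mid b$, write $b = q^r b'$ with $\gcd(q, b') = 1$, so $X^b + Z^b = (X^{b'} + Z^{b'})^{q^r}$, whose $b'$ distinct linear irreducible factors $X - \zeta Z$ (for $\zeta^{b'} = -1$) each carry multiplicity $q^r$; being a $p$-th power would require $p \mid q^r$, but $(mn)^{-1} \in k$ forces $q \nmid m$, so $p \neq q$, a contradiction. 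The $4 \mid m$ case (which forces $\charp k \neq 2$) is handled by the same trichotomy with $4$ replacing $p$.
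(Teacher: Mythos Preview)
Your argument is correct, but it follows a different route than the paper's.

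The paper does not work over $K=\operatorname{Frac}(k[X,Z,W_3,\dots,W_d])$ at all. Instead it equips $S$ with the grading $\deg X=\deg Z=\deg W_j=m$, $\deg Y=n$, sets $A=S/(f)$ (resp.\ $S/(g)$), observes that $A$ is a graded Cohen--Macaulay hypersurface, and takes the homogeneous regular sequence $Z,W_3,\dots,W_d$. The quotient $A/(Z,W_3,\dots,W_d)$ is in both cases isomorphic to $k[X,Y]/(X^n+Y^m)$, which is a domain by the two-variable Lemma~\ref{d1}. A separate bootstrap lemma (Lemma~\ref{domain-reg}: if a graded ring has a prime ideal generated by a homogeneous regular sequence, then the ring itself is a domain) then forces $A$ to be a domain, so $f$ (resp.\ $g$) is irreducible. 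Thus the paper treats (1) and (2) uniformly, with no case split and no analysis of $h_2=X^a(X^b+Z^b)$.

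Your approach trades that structural lemma for the full Vahlen--Capelli/Lang criterion for $Y^m-c$ over $K$, together with the observation $K^p\cap R=R^p$ coming from $R$ being integrally closed. This is more elementary in that it needs no grading and no Cohen--Macaulay input, and it would work equally well if $A$ were not graded. The price is the case analysis in (2) for $d=2$: you have to factor $h_2$, argue coprimality, and split on whether $\operatorname{char}k$ divides $b$. The paper's route sidesteps all of that by killing $Z$ first, at the cost of proving and invoking Lemma~\ref{domain-reg}. Both arguments ultimately rest on the same two-variable fact (Lemma~\ref{d1} is itself an instance of the criterion you cite).
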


To prove Proposition \ref{Gen},   it is convenient to prove the following:
\begin{lemma}\label{domain-reg}
Let $A = \bigoplus_{n \geq 0}A_n$ be a finitely generated graded algebra over a field $k = A_0$. (Note we do not assume $A$ is standard graded). Let $P = (u_1,\ldots, u_g)$ be a prime ideal such that $u_1,\ldots, u_g$ is an $A$-regular sequence. Furthermore $u_i$ are homogeneous for $i = 1,\cdots, g$. Then
\begin{enumerate}[\rm (1)]
\item
$A$ is a domain.
\item
$0 \subsetneq (u_1) \subsetneq (u_1, u_2) \subsetneq \cdots \subsetneq (u_1,\cdots, u_g)$ is a sequence of prime ideals in $A$.
\end{enumerate}
\end{lemma}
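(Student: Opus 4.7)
The plan is to induct on $g$, the length of the regular sequence. The base case $g = 0$ is vacuous: $P = 0$ is prime, so $A$ is a domain. For the inductive step, I pass to the quotient $B = A/(u_1)$, in which the images $\bar u_2, \ldots, \bar u_g$ form a homogeneous regular sequence generating the prime ideal $P/(u_1)$. Applying the inductive hypothesis to $B$ gives at once that $B$ is a domain and that $0 \subsetneq (\bar u_2) \subsetneq \cdots \subsetneq P/(u_1)$ is a chain of prime ideals in $B$. Pulling this chain back along $A \twoheadrightarrow B$ produces the desired chain $(u_1) \subsetneq (u_1,u_2) \subsetneq \cdots \subsetneq P$ in $A$, and prepending $0 \subsetneq (u_1)$ is automatic since $u_1$ is regular, hence nonzero.

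Everything then reduces to a single crux: if $A$ is a finitely generated graded $k$-algebra with $k = A_0$, $u \in A$ is a homogeneous regular element, and $A/(u)$ is a domain, then $A$ is a domain; this gives simultaneously that $(u_1)$ is prime and finishes the induction. First observe that $u$ necessarily has positive degree, since a homogeneous element of degree $0$ lies in $A_0 = k$ and is either a unit or zero, while $(u)$ is a proper nonzero ideal. I would then argue by contradiction: assume $ab = 0$ with $a, b$ homogeneous and nonzero. Reducing modulo $u$, primality forces $\bar a = 0$ or $\bar b = 0$; dividing out $u$, which is allowed by regularity, yields a new homogeneous pair satisfying the same annihilation relation but with the sum of their degrees strictly decreased by $\deg u > 0$. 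Iterating, the combined degree drops at each step, but it is bounded below by zero because $A_n = 0$ for $n < 0$, so the process must halt. At the halting step one has a nonzero pair $(a', b')$ with $a'b' = 0$ but both images nonzero in the domain $A/(u)$, which is the sought contradiction. Hence $A$ is a domain.

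The main obstacle is precisely this lifting step, that is, upgrading primality of $(u)$ to primality of $(0)$. The graded hypothesis (positive grading of $A$, together with $\deg u > 0$) is exactly what makes the descent on degrees terminate; in a Noetherian local analogue one would instead invoke Krull's intersection theorem to conclude $a, b \in \bigcap_n (u)^n = 0$. Here the finite-degree bound plays the same role and no further machinery is needed.
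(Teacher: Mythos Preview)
Your proof is correct. The inductive scaffolding is essentially the paper's: both reduce everything to the single claim that if $u$ is homogeneous and $A$-regular with $A/(u)$ a domain, then $A$ is a domain. (You peel off $u_1$ and apply the hypothesis to $A/(u_1)$; the paper peels off $u_g$ and applies the hypothesis to $A$ with the shorter sequence $u_1,\ldots,u_{g-1}$ once $A/(u_1,\ldots,u_{g-1})$ is seen to be a domain --- these are interchangeable.)

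Where you genuinely diverge is in the proof of that crux. You run a hands-on descent: given homogeneous $a,b\neq 0$ with $ab=0$, primality of $(u)$ lets you factor $u$ out of one of them, regularity of $u$ preserves the relation, and $\deg u>0$ forces termination in a contradiction. The paper instead picks a minimal prime $Q\subseteq (u)$, notes that any homogeneous $a\in Q$ satisfies $a=ru$ with $r\in Q$ (since $u\notin Q$), concludes $Q=uQ$, and kills $Q$ with the graded Nakayama lemma. Your argument is more elementary and entirely self-contained; the paper's is a line shorter and makes explicit that the positivity of the grading enters exactly as a Nakayama-type statement. Both are using $\deg u>0$ in the same essential way, as you yourself observe in your final paragraph.
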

\begin{proof}
We prove the result by induction on $g$. We first consider the case $g = 1$.
Let $P = (u_1)$. Note $\height P = 1$ since $u_1$ is $A$-regular. Let $Q \subsetneq P$ be a minimal prime of $A$. Note $Q$ is a homogeneous ideal. Let $a \in Q$ be homogeneous. Then $a = r u_1$. As $u_1 \notin Q$ we get $r \in Q$. It follows that
$Q = u_1Q$. By graded Nakayama's Lemma we get $Q = 0$. So $A$ is a domain.

Let $g \geq 2$. We assume the result for $g-1$. Let $I = (u_1,\ldots, u_{g-1})$ and $B = A/I$. Then the ideal $\ov{P} = P/I = (\ov{u}_g)$ is prime in $B$. Furthermore $\ov{u}_g$ is $B$-regular. Thus by the $g = 1$ case we get that $B$ is a domain. So by induction we are done. 
\end{proof}
We now give
\begin{proof}[Proof of Proposition \ref{Gen}]
(1) Let $A = S/(f)$. Then $A$ is \CM. Furthermore $Z, W_3,\ldots,W_d$ is an $A$-regular sequence. Note 
\[
B = A/(Z, W_3,\ldots,W_d) \cong k[X,Y]/(X^n + Y^m).
\]
By Lemma \ref{d1} we get that $B$ is a domain. So by Lemma \ref{domain-reg} we get that $A$ is a domain. So $f$ is irreducible.

(2) This is similar to (1).
\end{proof}
\section{Chow groups}\label{sect-Chow}

We now give a brief discussion on Chow groups. We assume that for the rings under discussion there is a function $\cdim$ such that
\begin{enumerate}
 \item If $P \subseteq Q $ are adjacent primes then $\cdim A/Q = \cdim A/P - 1$.
 \item
 If $S$ is a multiplicatively closed set and $P$ is a prime ideal such that $P \cap S = \emptyset$ then $\cdim A/P = 
 \cdim A_S/P_S$.
 \item
 If $A$ is a affine algebra or a homorphic image of a regular local ring then $\cdim A = $ Krull dimension of $A$.
\end{enumerate}
For localizations of finitely generated algebra's over regular rings such a notion exists see \cite[p.\ 72]{Rob}.
We will only use it for affine algebra's. Furthermore the only localization we will consider is inverting a single element $x$.
Note that if $A$ is an affine algebra then $A_x = A[Y]/(xY - 1)$ is also an affine algebra.

We now define Chow groups for the rings  under consideration. Let $Z_i(A)$ be the free Abelian group with basis $[A/P]$ such that 
$P$ is a prime ideal in $A$ and $\cdim A/P = i$. If $M$ is a finitely generated module with $\cdim M \leq i$ then define the cycle
of dimension $i$ associated to $M$, denoted by $[M]_i$, to be the sum
\[
 \sum_{\cdim A/P = i}\text{length}(M_P)[A/P].
\]
Let $Q$ be a prime ideal with $\cdim A/Q = i+1$ and let $x$ be an element in $A$ not in $Q$ set $\divS(Q,x)$ to be 
the cycle $[(A/Q)/x (A/Q)]_i$. Rational equivalence  is the equivalence relation  on $Z_i(A)$  generated by setting $\divS(Q,x) = 0$
for all prime ideals $Q$ with $\cdim A/Q = i+1$ and for all $x \notin Q$. 
The Chow group of $A$ is the direct sum of the groups $A_i(A)$  where $A_i(A)$ is the group of cycles $Z_i(A)$ modulo rational equivalence.

\s \label{graded-gen} Now let $A = \bigoplus_{n \geq 0}A_n$ be a finitely generated grade domain over a field $k = A_0$. Note we are not assuming that
$A$ is standard graded. Let $d = \dim A$. 
The following result is well-known. We give a proof due to lack of a suitable reference.
\begin{proposition}\label{ht1}[with hypotheses as in \ref{graded-gen}]
 The group $A_{d-1}(A)$ is generated by $[A/P]$ where $P$ is a homogeneous prime of height one.
\end{proposition}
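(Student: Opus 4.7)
My plan is as follows. Let $P$ be a prime of $A$ with $\cdim(A/P)=d-1$, equivalently $\height P = 1$ since $A$ is a graded domain with $\cdim A = d$. The target is to show that $[A/P]$ lies in the subgroup $H \subseteq A_{d-1}(A)$ generated by $[A/Q]$ for $Q$ a homogeneous height-one prime. The first step is a standard dichotomy: let $P^*$ be the ideal of $A$ generated by the homogeneous elements of $P$. A short argument using the primality of $P$ shows that $P^*$ is a homogeneous prime ideal contained in $P$. Since $A$ is a domain and $\height P = 1$, one has $\height P^* \in \{0,1\}$, so either $P^* = P$ — in which case $P$ is homogeneous and $[A/P]$ already lies in $H$ — or $P^*=0$, meaning $P$ contains no nonzero homogeneous element. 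Only this latter case requires work.

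In the case $P^* = 0$, I would degenerate $P$ to a homogeneous ideal via homogenization. Introduce an indeterminate $S$ of degree one and form the graded polynomial ring $B = A[S]$. For each nonzero $f = f_0 + f_1 + \cdots + f_D \in A$ with $f_D \neq 0$ (where $f_i \in A_i$), define its homogenization $\wt{f}(S) := S^D f_0 + S^{D-1}f_1 + \cdots + f_D \in B$, which is homogeneous of degree $D$. Let $\wt{P} \subseteq B$ be the $S$-saturation of the ideal generated by these $\wt{f}$ for $f \in P$. Standard facts about homogenization yield that $\wt{P}$ is a homogeneous prime of height one in $B$, that $B/\wt{P}$ is flat over $k[S]$, and that its fibres at $S=1$ and $S=0$ are $A/P$ and $A/P^\top$ respectively, where $P^\top \subseteq A$ is the homogeneous ideal generated by the leading forms of elements of $P$. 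By flatness, $P^\top$ has height one, so its minimal primes are homogeneous height-one primes of $A$, and hence $[A/P^\top]_{d-1}$ already lies in $H$.

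The crux of the proof is then to establish $[A/P] = [A/P^\top]_{d-1}$ in $A_{d-1}(A)$. My preferred route is via the localization exact sequence: choose a homogeneous $h \in A$ of positive degree with $h \notin P$, which exists because $P^* = 0$, and consider
\[
A_{d-1}(A/hA) \xrightarrow{j_*} A_{d-1}(A) \rightarrow A_{d-1}(A_h) \rightarrow 0.
\]
Since $h$ is homogeneous, every minimal prime of $hA$ is homogeneous, so $\image(j_*) \subseteq H$. It therefore suffices to show that the image of $[A/P]$ in $A_{d-1}(A_h)$ vanishes. In the graded localization $A_h$, which takes the form $(A_h)_0[h,h^{-1}]$, the prime $P A_h$ meets the degree-zero subring $(A_h)_0$ trivially — a direct consequence of $P^* = 0$ — so $P A_h$ is a \emph{horizontal} divisor in the sense of the fibration $\operatorname{Spec}(A_h) \to \operatorname{Spec}((A_h)_0)$. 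Passing to the fraction field of $(A_h)_0$, the extension of $P A_h$ becomes principal, and after clearing denominators one should obtain an element whose divisor in $A_{d-1}(A_h)$ realises $[A_h/PA_h]$ as a boundary.

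The hard part is completing this vanishing argument in full generality: the subring $(A_h)_0$ need not be a UFD, so $P A_h$ need not be principal and its class need not obviously be a boundary. I expect the resolution to combine the relations $\divS(PA_h,\,g)=0$ for several carefully chosen $g \in A_h$, or to use the homogenization $\wt{P}$ in $B$ together with the localization at $h$, to produce a rational equivalence that kills $[A_h/PA_h]$ modulo classes supported on $V(h)$, all of which are homogeneous and therefore already lie in $H$.
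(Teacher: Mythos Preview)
Your proposal has a genuine gap that you yourself flag: after localizing at a single homogeneous element $h \notin P$, you need $[A_h/PA_h] = 0$ in $A_{d-1}(A_h)$, and you do not establish this. The obstruction you name is real --- $(A_h)_0$ need not be a UFD or even normal, so the horizontal prime $PA_h$ has no reason to be principal or rationally trivial, and the homogenization $\wt{P} \subseteq A[S]$ you build does not by itself produce a relation in $A_{d-1}(A)$ without a further specialization/deformation argument that you have not supplied.

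The paper sidesteps all of this by localizing at the \emph{entire} multiplicative set $S$ of nonzero homogeneous elements rather than at a single $h$. Then $A_S = L[t,t^{-1}]$ for a field $L$, which is a principal ideal domain and in particular a UFD, so $A_{d-1}(A_S) = 0$ outright. The localization sequence
\[
0 \rt H \rt A_{d-1}(A) \rt A_{d-1}(A_S) \rt 0
\]
then gives $A_{d-1}(A) = H$, where $H$ is generated by $[A/P]$ for height-one $P$ with $P \cap S \neq \emptyset$. But any such $P$ contains a nonzero homogeneous element, and since $A$ is a domain this forces $P$ to be homogeneous --- exactly your observation that $P^* \neq 0$ implies $P^* = P$. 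That finishes the proof. In other words, your first dichotomy step is the whole argument once you invert the correct multiplicative set; the case $P^* = 0$ never needs separate treatment because such $P$ survive to $A_S$ and are automatically killed there. The homogenization machinery is unnecessary.
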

\begin{proof}
 Let $S$ be the group of all non-zero homogeneous elements in $A$. Then $A_S = L[t,t^{-1}]$ where $L$ is a field. By the localization
 sequence \cite[1.2.4]{Rob} we have an exact sequence
 \[
  0 \rt H \rt A_{d-1}(A) \rt A_{d-1}(A_S) \rt 0.
 \]
As $A_S$ is a UFD we have $A_{d-1}(A_S) = 0$. So $A_{d-1}(A) = H$. We note that 
$$H = <[A/P] \mid \height P = 1 \ \text{and} \ P \cap S \neq \emptyset >. $$ 
If $P$ is a height one prime in $A$ which contains a non-zero homogeneous element then $P$ is graded as $A$ is a domain. The result follows.
\end{proof}

We do not know if $A_i(A)$ is generated by homogeneous primes for $i \neq d-1$. 
To overcome this obstacle we define  \emph{homogeneous} Chow group of 
graded rings as follows. Let $A = \bigoplus_{n \in \ZZ}A_n$ be a Noetherian  $\ZZ$-graded ring which is a localization of a finitely 
generated algebra over a regular ring.
 Let $Z_i^*(A)$ be the free abelian group
generated by $[A/P]$ where $P$ is homogeneous and $\cdim A/P = i$. If $Q$ is homogeneous prime with $\cdim A/Q = i +1$ and $x$ is a 
homogeneous element not in $Q$ then note that all minimal primes of $(A/Q)/x(A/Q)$ are homogeneous. So $\divS(Q,x)\in Z_i^*(A)$.
Graded rational equivalence is the equivalence relation on $Z_i^*(A)$ generated by setting
$\divS(Q,x) = 0$ for all homogeneous primes $Q$ with $\cdim A/Q = i+1$ and for all homogeneous $x \notin Q$.
The graded Chow group of $A$ is the direct sum of $A_i^*(A)$ where  $A_i^*(A)$ is the group of cycles $Z_i^*(A)$ modulo graded rational 
equivalence.

The proof of the following result is similar to that of the ungraded case \cite[1.2.4]{Rob}.
\begin{proposition}
$A = \bigoplus_{n \in \ZZ}A_n$ be a Noetherian  $\ZZ$-graded ring which is a localization of a finitely generated algebra over a regular ring. Let $S$ be a multiplicatively closed set of homogeneous elements. Then for all $i \geq 0$ we have 
an exact sequence
\[
0 \rt H \rt A_i^*(A) \rt A_i^*(A_S) \rt 0;
\]
where $H = <[A/P] \mid \ \cdim A/P = i, P \ \text{homogeneous and} \ P \cap S \neq \emptyset  >$.
\end{proposition}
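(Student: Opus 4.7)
The plan is to mirror Roberts's proof of the ungraded localization sequence \cite[1.2.4]{Rob}, checking at each step that the constructions stay inside the graded category. First I would construct a map
\[
\pi\colon A_i^*(A) \to A_i^*(A_S), \qquad [A/P] \longmapsto \begin{cases} [A_S/P_S] & \text{if } P \cap S = \emptyset,\\ 0 & \text{otherwise,}\end{cases}
\]
and verify well-definedness on $Z_i^*(A)$ and on graded rational equivalence. Two observations keep everything homogeneous: the preimage of a homogeneous prime of $A_S$ under $A \to A_S$ is a homogeneous prime of $A$ missing $S$ (since $S$ is a set of homogeneous elements), and property (2) of $\cdim$ gives $\cdim A/P = \cdim A_S/P_S$ whenever $P \cap S = \emptyset$. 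For compatibility with graded rational equivalence, take a homogeneous prime $Q$ with $\cdim A/Q = i+1$ and a homogeneous $x \notin Q$: if $Q \cap S \neq \emptyset$ every minimal prime $P$ of $(A/Q)/x(A/Q)$ contains $Q$ and hence meets $S$, so $\pi(\divS(Q,x)) = 0$; if $Q \cap S = \emptyset$ the isomorphism $(A/Q)_P \cong (A_S/Q_S)_{P_S}$ for $P \cap S = \emptyset$ shows $\pi(\divS(Q,x)) = \divS(Q_S, x/1)$, which is a boundary in $A_i^*(A_S)$. Surjectivity is immediate since every homogeneous prime $P'$ of $A_S$ of $\cdim i$ is $P_S$ for $P = P' \cap A$, and the inclusion $H \subseteq \ker \pi$ holds by construction.

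The content is the reverse inclusion $\ker \pi \subseteq H$. Given $z \in Z_i^*(A)$ with $\pi(z) = 0$ in $A_i^*(A_S)$, decompose $z = z_1 + z_2$ where $z_1$ is supported on primes meeting $S$ (so $z_1 \in H$) and $z_2$ on primes missing $S$. By hypothesis there exist homogeneous primes $Q'_j$ of $A_S$ with $\cdim A_S/Q'_j = i+1$, homogeneous $y_j = x_j/s_j \notin Q'_j$ (with $x_j, s_j$ homogeneous in $A$), and integers $m_j$ such that $\pi(z_2) = \sum_j m_j \divS(Q'_j, y_j)$ already at the level of $Z_i^*(A_S)$. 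Lifting $Q_j = Q'_j \cap A$ (which is homogeneous with $Q_j \cap S = \emptyset$ and $\cdim A/Q_j = i+1$), set
\[
w \;=\; z_2 \;-\; \sum_j m_j \divS(Q_j, x_j) \;\in\; Z_i^*(A).
\]
Since $s_j/1$ is a unit of $A_S$ one has $\divS(Q'_j, s_j/1) = 0$, hence $\pi(\divS(Q_j, x_j)) = \divS(Q'_j, x_j/1) = \divS(Q'_j, y_j)$, so $\pi(w) = 0$ in $Z_i^*(A_S)$ itself. This forces $w$ to be supported on primes meeting $S$, giving $w \in H$. As the $\divS(Q_j, x_j)$ are boundaries in $A_i^*(A)$, the class of $z_2$ in $A_i^*(A)$ equals that of $w$ and hence lies in $H$, so $z \in H$.

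The main obstacle is the bookkeeping in this last step: one must match local multiplicities so that the correction $\sum_j m_j \divS(Q_j, x_j)$ cancels the non-$H$ part of $z_2$ exactly at the level of cycles on $A_S$ (not merely up to further boundaries). This reduces to the identification $(A/Q)_P \cong (A_S/Q_S)_{P_S}$ for $P \cap S = \emptyset$, which is where the hypothesis that $S$ consists of homogeneous elements is used in an essential way, ensuring that the localization map preserves the graded structure on which all the $Z_i^*$ and graded rational equivalence relations are built.
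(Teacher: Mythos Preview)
Your proposal is correct and follows exactly the approach the paper indicates: the paper does not give a proof at all, merely stating that ``the proof of the following result is similar to that of the ungraded case \cite[1.2.4]{Rob}.'' You have carried out precisely that adaptation, verifying that the maps on cycles and on rational equivalence remain within the graded category because $S$ consists of homogeneous elements; nothing further is needed.
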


\section{Proof of Theorem \ref{dim2}}
In this section we give a proof of Theorem \ref{dim2}.

The following result is needed. I believe this is already known. However because of lack of a reference we give the proof here.
\begin{lemma}\label{inter}
Let $S$ be a Noetherian ring of dimension $d \geq 1$. Let $R = S[X]$ and $T = S[X, X^{-1}]$. Grade $R$ and $T$ by defining $\deg X = 1$. Then we have
\begin{enumerate}[\rm(1)]
\item
If $\m$ is a graded maximal ideal of $R$ then $\m = (\n , X)$ where $\n$ is a maximal ideal in $S$.
\item 
There is no homogeneous prime of height $d+1$ in  $T$.
\item
If $\m$ is a graded prime ideal in $T$ of height $d$ then $\m \cap S$ is a maximal ideal in $S$.
\end{enumerate}
\end{lemma}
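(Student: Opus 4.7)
My approach is to first classify the homogeneous primes of $R = S[X]$ (with $\deg X = 1$) and then read off all three statements. A graded ideal of $R$ has the form $\bigoplus_{n \geq 0} I_n X^n$ with $I_0 \subseteq I_1 \subseteq \cdots$ an ascending chain of ideals of $S$. If such an ideal $I$ is prime I would split into two cases. If $X \notin I$, then for each $a \in I_n$ the relation $aX^n \in I$ together with $X^n \notin I$ forces $a \in I$, so $I_n \subseteq I_0$ for every $n$, and $I = \mathfrak{p} R$ with $\mathfrak{p} = I_0$ prime in $S$. If $X \in I$ then $I \supseteq XR$, and since $R/I$ is a domain the image of $I$ in $R/XR \cong S$ is a prime $\mathfrak{p}$, giving $I = (\mathfrak{p}, X)$. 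Thus the graded primes of $R$ are exactly the ideals $\mathfrak{p} R$ and $(\mathfrak{p}, X)$ as $\mathfrak{p}$ ranges over the primes of $S$.

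For (1), given a graded maximal ideal $\mathfrak{m}$ of $R$ I would first check that $\mathfrak{m}$ is prime (the usual argument: for homogeneous $a, b$ with $ab \in \mathfrak{m}$ and $a \notin \mathfrak{m}$, the graded ideal $\mathfrak{m} + (a)$ equals $R$ by maximality, so $1 = m + ra$ for some $m \in \mathfrak{m}$ and then $b = bm + rab \in \mathfrak{m}$). The possibility $\mathfrak{m} = \mathfrak{p} R$ is excluded by the strict graded inclusion $\mathfrak{p} R \subsetneq (\mathfrak{p}, X)$, leaving $\mathfrak{m} = (\mathfrak{p}, X)$. Setting $\mathfrak{n} = \mathfrak{p}$, the identification $R/\mathfrak{m} \cong S/\mathfrak{n}$ (with trivial grading) shows that every ideal of $S/\mathfrak{n}$ lifts to a graded ideal of $R$ containing $\mathfrak{m}$, so graded maximality of $\mathfrak{m}$ forces $\mathfrak{n}$ to be maximal in $S$.

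For (2) and (3), graded primes of $T$ correspond bijectively to graded primes of $R$ not containing $X$. By the classification these are exactly the ideals $\mathfrak{p} R$ with $\mathfrak{p}$ prime in $S$, so every graded prime of $T$ has the form $\mathfrak{p} T$ and contracts to $\mathfrak{p}$ in $S$. Since height is preserved under the flat extension $S \subseteq S[X]$ and under localisation at $X$, we get $\height \mathfrak{p} T = \height \mathfrak{p} \leq d$, which is (2). If this height equals $d = \dim S$ then $\mathfrak{p}$ admits no proper prime overideal in $S$ (any such would produce a chain of length $d+1$) and is therefore maximal, yielding (3).

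I do not foresee a real obstacle here: the only step requiring thought is the classification of graded primes of $R$, and once that short case analysis is in hand the three conclusions follow from formal manipulations with contraction, extension, and height.
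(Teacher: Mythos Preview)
Your proof is correct and takes a genuinely different route from the paper. The paper argues each item separately and somewhat ad hoc: for (1) it shows directly that $X\in\m$ (otherwise $\m=(\m\cap S)R$ and $R/\m\cong (S/(\m\cap S))[X]$ is not a field); for (2) it pulls a hypothetical graded height-$(d{+}1)$ prime of $T$ back to a graded maximal ideal of $R$ and invokes (1) to get $X$ in it; and for (3) it argues by contradiction, picking $a\in S\setminus(\m\cap S)$ not a unit modulo $\m\cap S$, checking $(\m,a)\neq T$, and using Krull's principal ideal theorem to produce a graded prime of height $d{+}1$, contradicting (2). Your approach instead invests in a complete classification of the graded primes of $S[X]$ (namely $\mathfrak{p}R$ and $(\mathfrak{p},X)$), after which all three statements fall out from the bijection between graded primes of $T$ and primes of $S$ together with the standard equality $\height \mathfrak{p}T=\height \mathfrak{p}$. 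Your route is more structural and makes (3) essentially immediate, whereas the paper's (3) is the most involved step; on the other hand the paper's proof of (1) is marginally quicker since it does not need to first check that a graded-maximal ideal is prime.
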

\begin{proof}
(1) It suffices to show $X \in \m$. Set $Q = \m \cap S$. Suppose if possible $X \notin \m$. Let $f = \sum_{i = o}^{m}a_iX^i \in \m$. As $\m$ is homogeneous we get 
$a_iX^i \in \m$ for all $i$. As $X \notin \m$ we get that $a_i \in \m$ for all $i$. It follows that $\m = QR$. But $R/QR = S/Q[X]$ is not a field, a contradiction.

(2)  Suppose if possible there is a homogenous prime ideal $\n$ of height $d+1$ in $T$. 
Notice $\n \cap R$ is a graded prime ideal of height $d+1$. It follows that $\n \cap R$ is a graded maximal ideal of $R$.
By (1) we get $X \in \n \cap R$. It follows that $1 \in \n$, a contradiction. 

(3) Set $Q = \m \cap S$. Suppose if possible $Q$ is not a maximal ideal in $S$. Then 
$S/Q$ is not a field. So there exists $a \in S\setminus Q$ such that it's image $\ov{a}$ in $S/Q$ is not invertible.
We have an exact sequence
\[
0 \rt T/\m \xrightarrow{a} T/\m \rt T/(\m, a) \rt 0.
\]
We assert that $(\m , a) \neq T$. If this assertion is false then
$r + at = 1$ for some $r \in \m$ and $t \in T$. So we get $r_0 + at_0 = 1$. This implies that $\ov{a}$ is invertible in $S/Q$, a contradiction.
Note $\height (\m, a) = d+1$. Let $\n$ be any minimal prime of $(\m, a)$. Notice $\n$ is graded and $\height \n = d+1$.
This contradicts (2).
\end{proof}
We now give:
\begin{proof}[Proof of Theorem \ref{dim2}]
We first note that for any $\xi \in T_m$, $\xi \neq 0$ we have \\
$\sharp \Ass_R^{(1)} R/\xi R = 1$. To see this note that $A$ is finite as an $R$-module. Suppose if possible $P_1, P_2$ be two height $1$ primes lying over $\xi$. Let $Q_i$ for $i = 1,2$ be primes in $A$ lying above $P_i$. It is elementary to see that $\height Q_i = 1$ for $i = 1,2$. This gives  $\sharp \Ass_A^{(1)} A/\xi A \geq 2 $, a contradiction.

Notice $T_{X_2} = k[X_1/X_2][X_2, X_2^{-1}]$. Set $B = k[X_1/X_2]$. We have an isomorphism
\begin{align*}
B &\rt k[t] \\
X_1/X_2 &\mapsto t.
\end{align*} 
Notice $R_{X_2}$ is a finite extension of $T_{X_2}$. By the localization sequence we have
\[
0 \rt H \rt A_1^*(R) \rt A_1^*(R_{X_2}) \rt 0.
\]
Now
$$ H = \{ [R/P] \mid \dim R/P = 1, \ P \ \text{homogeneous and} \ X_2 \in P \}. $$
As $R$ is a catenary domain we get that $\height P = 1$ if $\dim R/P = 1$. 
As $\Ass^{(1)}_R R/X_2 R $ is a singleton set, say it is $\{ Q \}$ we get that $H = \ZZ [R/Q]$. Notice $\divS(0,X_2) = n[R/Q] $ for some $n \geq 1$. It follows that $H$ is a torsion group.

Let $P$ be a homogeneous prime of height one such that $X_2 \notin P$. Notice $\dim R/P = 1$. Also  note that $P_{X_2}$ is a homogeneous prime ideal of height one in $R_{X_2}$. So $L = P_{X_2}\cap T_{X_2}$ is a homogeneous prime ideal of height one in $T_{X_2}$. By Lemma \ref{inter}(2) we get that $L \cap B$ is a maximal ideal of $B$. As $k$ is algebraically closed we get $L \cap B = (X_1/X_2 - c)$ for some $c \in k$. It follows that $\xi = X_1 - cX_2 \in P$. By our assumptions we get that
$\Ass_A^{(1)} R/\xi R = \{ P \}$. Notice $\divS(0,X_2) = n[R/P] $ for some $n \geq 1$. So $n [R/P] = 0$ in $A_1^*(R)$. Thus $A_1^*(R)$ is a torsion group.

The natural map $A_1^*(R) \rt A_1(R)$ is surjective by Proposition \ref{ht1}. The result follows.
\end{proof}

\section{Proof of Theorem \ref{dim-d} }
In this section we prove Theorem \ref{dim-d}. We need a few preparatory results. The first one is:
\begin{lemma}\label{H}
Let $R = \bigoplus_{n \geq 0}R_n$ be a finitely generated graded  algebra  over a field $k = R_0$. Assume $\dim R = d$.
Let $z $ be a non-zero homogeneous regular  element in $R$. Set
$$ H = < [R/P] \mid  \dim R/P = 1, \text{$P$ is homogeneous and} \ z \in P >;$$
considered as a subgroup of $A_{1}^*(R)$. Set $S = R/(z)$. For a prime $Q$ in $S$ with $\dim S/Q = 1$ let $P_Q$ be the prime in $R$ with $P_Q/(z) = Q$. Then the map
\begin{align*}
A_{1}^*(S) &\rt H \\
[S/Q] &\mapsto [R/P_Q] 
\end{align*}
is well-defined and surjective. In particular if $A_1^*(S)$ is a torsion group 
 then so is $H$.
\end{lemma}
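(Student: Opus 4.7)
The plan is to first define the map $\phi$ at the level of cycle groups $Z_1^*(S) \to H \subseteq A_1^*(R)$, then verify that it sends graded rational equivalence in $S$ into graded rational equivalence in $R$ so that it descends to $A_1^*(S) \to H$, and finally check surjectivity by the obvious inverse on generators.

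At the cycle level, if $Q$ is a homogeneous prime in $S = R/(z)$ with $\dim S/Q = 1$, then its preimage $P_Q$ in $R$ is homogeneous, contains $z$, and the canonical isomorphism $R/P_Q \cong S/Q$ gives $\dim R/P_Q = 1$. Hence $[R/P_Q]$ is one of the listed generators of $H$, and extending linearly yields a homomorphism $\phi \colon Z_1^*(S) \to H$.

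The main step is to show $\phi$ kills graded rational equivalence. Take a homogeneous prime $Q'$ of $S$ with $\dim S/Q' = 2$ and a homogeneous $w \in S \setminus Q'$; lift $w$ to a homogeneous $\wt{w} \in R$. Then $\wt{w} \notin P_{Q'}$, else its image $w \in P_{Q'}/(z) = Q'$, contradicting $w \notin Q'$. Moreover $\dim R/P_{Q'} = 2$ via the isomorphism $R/P_{Q'} \cong S/Q'$, which carries $\wt{w}$ to $w$. Under this isomorphism, the height-one primes of $R/P_{Q'}$ containing $\wt{w}$ correspond bijectively to homogeneous primes $Q$ of $S$ with $Q' \subset Q$ and $\dim S/Q = 1$, namely those of the form $P_Q/P_{Q'}$, and the local lengths match. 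Therefore $\phi(\divS(Q', w)) = \divS(P_{Q'}, \wt{w})$, which is zero in $A_1^*(R)$ by the definition of graded rational equivalence. Hence $\phi$ factors through a homomorphism $A_1^*(S) \to H$.

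For surjectivity, each generator $[R/P] \in H$ has $z \in P$, so $Q := P/(z)$ is a homogeneous prime of $S$ with $\dim S/Q = 1$ and $P_Q = P$, whence $[S/Q] \mapsto [R/P]$. The torsion assertion is immediate, since the surjective image of a torsion group is torsion. The only real obstacle is the routine bookkeeping of supports and local lengths in the main step, and this poses no real difficulty once one exploits the concrete isomorphism $R/P_{Q'} \cong S/Q'$.
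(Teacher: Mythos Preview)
Your proposal is correct and follows essentially the same approach as the paper: define the map on cycles, lift a homogeneous element $w \notin Q'$ to $\wt w \notin P_{Q'}$, and use the identification $R/P_{Q'}\cong S/Q'$ (equivalently, the paper's $M=S/(L,\xi)=R/(P_L,\xi')$ together with $M_Q=M_{P_Q}$) to match $\phi(\operatorname{div}(Q',w))$ with $\operatorname{div}(P_{Q'},\wt w)$. Surjectivity and the torsion consequence are handled exactly as in the paper.
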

\begin{proof}
Note that we have a natural map
\begin{align*}
\eta \colon Z_1^*(S) &\rt H \\
[S/Q] &\mapsto [R/P_Q] 
\end{align*}
which is well-defined and surjective. Let $L$ be a prime ideal in $S$ with $\dim S/L = 2$ and let $\xi \in B$ be homogeneous
such that $\xi \notin L$. Let $P_L$ be prime ideal in $R$ with $P_L/(z) = L$ and let $\xi^\prime $ be any homogeneous
element in $R$ such that it's image in $S$ is $\xi$.

Set $M = S/(L, \xi) = R/(P_L , \xi^\prime)$. If $Q$ is a minimal prime of $M$ as an $S$-module then note that 
$P_Q$ is a minimal prime of $M$ as a $R$-module. 
Also note that $M_Q = M_{P_Q}$. Thus $\eta(\divS(\xi, L)) = \divS(\xi^\prime, P_L)$. 
So $\eta$ factors through a map $\ov{\eta} \colon A_1^*(S) \rt H $ which is the map we needed to show is well-defined. Also clearly $\ov{\eta}$ is surjective since $\eta$ is surjective. 
\end{proof}
The second preparatory result is the following:
\begin{proposition}\label{sec-prep}
 Let $A = \bigoplus_{n \geq 0}A_n $ be a finitely generated graded algebra over a field $k = A_0$. Note we do not assume that $A$
 is standard graded. Assume $d = \dim A \geq 2$ and that $A$ has a Noether normalization $T = k[X_1,\ldots, X_d]$ where $\deg X_1 = \cdots = \deg X_d = m$.
 Also assume that $A$ is \CM. 
 Let $R$ be any graded ring such that we have inclusions of graded rings $T \subseteq R \subseteq A$.
 We then have the following:
 \begin{enumerate}[\rm (1)]
  \item $R$ is equidimensional, i.e., $\dim R/P = \dim R$ for all minimal prime $P$ of $R$.
  \item
  Let $ \bu = u_1,\ldots, u_g$ be a homogeneous regular sequence in $T$. Let $P$ be any minimal prime in $R$ of 
  $\bu R$. Then
  \begin{enumerate}[\rm (a)]
   \item 
   $\height P = g$
   \item
   If $\sharp \Ass_A^{(g)} A/(\bu)A = 1$ then $\sharp \Ass_R^{(g)} R/(\bu)R = 1$.
  \end{enumerate}
\item
If $P$ is a homogenous prime ideal in $R$ then $\dim R/P = \dim R - \height P$.
  \end{enumerate}
\end{proposition}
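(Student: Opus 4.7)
The plan is to first set up three foundational facts about $R$. Since $A$ is finite over $T$ and $T$ is Noetherian, the intermediate ring $R$ is a finite $T$-module (submodule of a finitely generated $T$-module), and therefore a finitely generated graded $k$-algebra; in particular $R$ is Noetherian and catenary. Since $A$ is Cohen--Macaulay and finite over the regular ring $T$, the graded Auslander--Buchsbaum formula makes $A$ a graded-free $T$-module, so the $T$-submodule $R$ is torsion-free over $T$, meaning every nonzero element of $T$ is a nonzerodivisor on $R$. Finally, since $A$ is finite over $T \subseteq R$, the extension $R \subseteq A$ is integral, so lying-over holds.

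For part (1), I would observe that every element of a minimal prime $P$ of $R$ is a zerodivisor on $R$; combined with torsion-freeness, this forces $P \cap T = 0$. Hence $T \hookrightarrow R/P$ is a finite injection and $\dim R/P = \dim T = d$. For part (3), given a homogeneous prime $P$ of $R$, I would pick a minimal prime $P_0 \subseteq P$; by part (1), $R/P_0$ is a finitely generated graded $k$-domain of dimension $d$, for which the classical dimension formula gives $\dim R/P = d - \height(P/P_0)$. Catenarity of $R$ combined with equidimensionality from (1) lets one choose $P_0$ so that $\height(P/P_0) = \height P$, yielding $\dim R/P = \dim R - \height P$.

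For part (2), the plan is to pass to $A$ via contraction. Since $A$ is free over $T$, the $T$-regular sequence $u_1, \ldots, u_g$ is also $A$-regular, so every minimal prime $Q$ of $\bu A$ has $\height Q = g$ and $\dim A/Q = d - g$ by the CM property. For (2a), given a minimal prime $P$ of $\bu R$, lying-over produces a prime of $A$ contracting to $P$; shrinking it to a minimal prime $Q$ of $\bu A$ inside it still contracts to $P$ by minimality of $P$ over $\bu R$. The finite injection $R/P \hookrightarrow A/Q$ then forces $\dim R/P = d - g$, and (3) delivers $\height P = g$. For (2b), (2a) shows that $\height \bu R = g$, so every $P \in \Ass_R^{(g)} R/\bu R$ is already minimal over $\bu R$; the same contraction attaches to it a minimal prime $Q$ of $\bu A$. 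Because $A/\bu A$ is CM and so has no embedded primes, $\Ass_A A/\bu A = \Ass_A^{(g)} A/\bu A$, a singleton by hypothesis; hence $Q$, and thus $P = Q \cap R$, is unique.

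The most delicate step will be the dimension formula in (3), which requires marrying catenarity of the possibly-singular ring $R$ with equidimensionality from (1); without this formula, the transfer from the clean CM height count in $A$ to height information in $R$ in part (2a) cannot be closed. Once (3) is in place, the remaining work in (2a) and (2b) is largely bookkeeping through lying-over.
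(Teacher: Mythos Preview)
Your proof is correct and follows essentially the same approach as the paper: exploit that $A$ is free over $T$, use integrality of $R \subseteq A$ to transfer height and dimension information, and invoke the dimension formula for affine algebras for (3). The only organizational difference worth noting is in (2)(a): the paper argues directly that $\height P \leq g$ by Krull's principal ideal theorem and $\height P \geq g$ by lifting $P$ to a prime $Q$ of $A$ (where $\height Q \geq g$ since $\bu$ is $A$-regular) and applying incomparability, so it does not route (2)(a) through (3) as you do.
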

\begin{proof}
 (1) Let $Q$ be any prime in $A$ lying above $P$. Then notice $\height Q = 0$.
 As $A$ is \CM \ we have $\dim A = \dim A/Q$. As $A/Q$ is integral over $R/P$ we have $\dim R/P = \dim A/Q$. Finally note that
 $\dim R = \dim A$. So the result follows.
 
 (2)(a) By Krull's principal ideal Theorem we have $\height P \leq g$. Let $Q$ be a prime in $A$ lying over $P$. Notice
 as $A$ is \CM \ it is free over $T$. So $\bu$ is an $A$-regular sequence. In particular $\height Q \geq g$. So $\height P \geq g$.
 Thus $\height P = g$.

 (2)(b) By 2(a) it follows that all the primes considered are graded.  Suppose if possible $P_1,P_2 \in \Ass_R^{(g)} R/(\bu R)$. Let $Q_1, Q_2$ be primes in $A$ lying above $P_1$, $P_2$ respectively.
 It is easy to see that $\height Q_1 = \height Q_2 = g$. This gives $\sharp \Ass_A^{(g)} A/(\bu)A  \geq 2$, a contradiction. 
 
 (3) We note that as $R$ is an affine algebra it is catenary. The result follows by an argument similar to \cite[Lemma 2, p.\ 250]{Mat}.
 \end{proof}

We now give
\begin{proof}[Proof of Theorem \ref{dim-d}]
We prove our result by induction on $d = \dim A$.
 We first prove the result when $d = 2$.
 Notice   $T_{X_2} = k[X_1/X_2][X_2, X_2^{-1}]$. Set $B = k[X_1/X_2]$. We have an isomorphism
\begin{align*}
B &\rt k[t] \\
X_1/X_2 &\mapsto t.
\end{align*} 
Notice $R_{X_2}$ is a finite extension of $T_{X_2}$. By the localization sequence we have
\[
0 \rt H \rt A_1^*(R) \rt A_1^*(R_{X_2}) \rt 0.
\]
Now
$$ H = \{ [R/P] \mid \dim R/P = 1, \ P \ \text{homogeneous and} \ X_2 \in P \}. $$
By \ref{sec-prep} we get  $\height P = 1$ if $\dim R/P = 1$. 
By \ref{sec-prep},  $\Ass^{(1)}_R R/X_2 R $ is a singleton set, say it is $\{ Q \}$ we get that $H = \ZZ [R/Q]$. 
Let $Q^\prime$ be a minimal prime of $R$ with $Q^\prime \subseteq Q$. We note that $Q$ is the unique prime of height one containing $(Q^\prime, X_2)$.
Therefore $\divS(Q^\prime,X_2) = n[R/Q] $ for some $n \geq 1$. It follows that $H$ is a torsion group.

Let $P$ be a homogeneous prime of height one such that $X_2 \notin P$. Notice $\dim R/P = 1$. 
Also  note that $P_{X_2}$ is a homogeneous prime ideal of height one in $R_{X_2}$. 
So $L = P_{X_2}\cap T_{X_2}$ is a homogeneous prime ideal of height one in $T_{X_2}$. 
By Lemma \ref{inter}(3) we get that $L \cap B$ is a maximal ideal of $B$. As $k$ is algebraically closed we get 
$L \cap B = (X_1/X_2 - c)$ for some $c \in k$. It follows that $\xi = X_1 - cX_2 \in P$. By \ref{sec-prep} we get that
$\Ass_A^{(1)} R/\xi R = \{ P \}$. Let $Q$ be a minimal prime of $R$ with $Q \subseteq P$. We note that $P$ is the
unique prime of height one containing $(Q, \xi)$.
Therefore $\divS(Q,\xi) = n[R/P] $ for some $n \geq 1$.  So $n [R/P] = 0$ in $A_1^*(R)$. 
Thus $A_1^*(R)$ is a torsion group.

Now assume $d = \dim A \geq 3$ and the result has been proved for $d-1$. Notice $\grade(T_+, R/T ) \geq d-1$ and 
$\grade(T_+, A/R ) \geq d-2$.
As $k$ is algebraically closed it is in particular an infinite field. So there exists $\xi \in T_m$ such that $\xi $ is $R/T \oplus A/R$
regular. After a change of variables we may assume that $T = k[\xi = \xi_1, \xi_2,\ldots, \xi_d]$ where $\deg \xi_i = m$ for all
$i$. Set $\ov{T} = T/(\xi), \ov{R} = R/\xi R$ and $\ov{A} = A/\xi A$. Note as $\xi$ is $R/T\oplus A/R$-regular, we have an inclusion
of graded rings $\ov{T} \subseteq \ov{R} \subseteq \ov{A}$. Notice
\begin{enumerate}
 \item $\ov{T}$ is a Noether normalization of $\ov{A}$.
 \item $\ov{A}$ is \CM.
 \item $\grade(\ov{T}_+, \ov{R}) \geq d -2$.
 \item $\ov{T} = k[\xi_2,\ldots, \xi_d]$ where $\deg \xi_i = m$ for all $i$.
 \item 
 If $v_1,\ldots, v_{d-2} \in \ov{T}_m$ is a regular sequence then choosing a preimage $u_i$ of $v_i$ in $T_\m$ we get that
 $\xi, u_1,\ldots, u_{d-2} \in T_m$ is a $T$-regular sequence. So by our hypotheses 
 $$ \sharp \Ass_A^{(d-1)}A/(\xi, u_1,\ldots, u_{d-2})A = 1.$$
 Notice 
 $$A/(\xi, u_1,\ldots, u_{d-2})A = \ov{A}/(v_1,\ldots, v_{d-2})\ov{A}.$$
It follows that 
$$ \sharp \Ass_{\ov{A}}^{(d-2)} \ov{A}/( v_1,\ldots, v_{d-2})\ov{A} = 1.$$
 \end{enumerate}
 So by our inductive hypothesis we obtain $A_{1}^*(\ov{R})$ is a torsion group.

By the localization sequence we have 
\[
  0 \rt H \rt A_{d-1}^*(R) \rt A_{d-1}^* R_\xi \rt 0;
\]
where 
$$ H = < [R/P] \mid \dim R/P = 1 \ \text{$P$ homogeneous and} \ \xi \in P >. $$
By \ref{H} we have a surjective homomorphism $A_1^*(\ov{R}) \rt H$. It follows that $H$ is a torsion group.

Let $ \dim R/P = 1$ and assume $\xi \notin P$.
By \ref{sec-prep} we get that if $P$ is homogeneous then  $\dim R/P = 1$ if and only if $\height P = d-1$.
 Thus $PR_\xi$ is a graded prime ideal of $R_\xi$ with height  = $d-1$.

 $R_\xi$ is a finite extension of $T_\xi$. Notice $T_\xi = k[\xi_2/\xi,\ldots,\xi_d/\xi][\xi, \xi^{-1}]$. Set 
$B = k[\xi_2/\xi,\ldots,\xi_d/\xi]$. We note that $B \cong k[Y_2,\ldots, Y_d]$ under the mapping $\xi_i/\xi \mapsto Y_i$.
$L = PR_\xi \cap T_\xi$ is a graded prime ideal of height $d-1$. It follows that $L \cap B$ is a maximal ideal in $B$, see \ref{inter}(3).
Thus $L \cap B = (\xi_2/\xi -c_2, \ldots, \xi_d/\xi - c_d)$ for some $c_i \in k$. It follows that
$$ \xi_2 - c_2\xi,\ldots, \xi_d - c_d\xi \in P. $$
Notice $\xi_2 - c_2\xi, \ldots, \xi_d - c_d$ is a $T$-regular sequence. It follows from our hypothesis that
\[
 \Ass^{(d-1)}_R R/(\xi_2 -c_2\xi,\cdots, \xi_d - c_d\xi) = \{ P \}.
\]
By \ref{sec-prep} we have that all minimal primes of $(\xi_2-c_2 \xi,\cdots, \xi_{d-1}-c_{d-1}\xi)R$ will have height $d-2$. Furthermore it is clear that
at least one of them, say $Q$, is contained in $P$. It is elementary to see that
\[
 \Ass^{(d-1)}_R R/(Q, \xi_d - c_d \xi)R  = \{ P \}.
\]
So $\divS(Q, \xi_d - c_d \xi) = n[R/P]$ for some $n \geq 1$. Thus in $A_1^*(R)$ we have $n[R/P] = 0$.
It follows that 
$A_1^*(R)$ is a torsion group.
\end{proof}

\section{examples}
In this example we explicitly give examples of \CM \ domains which satisfy the hypotheses of $A$ in  Theorem \ref{dim-d}. They also satisfy the hypotheses of Theorem \ref{dim2} when $\dim A = 2$.
\begin{example}\label{ex-1}
$A = k[X_1, X_2, \ldots, X_d, Y ]/( Y^m + \sum_{j = 1}^{d} X_j^n)$ where $k$ is algebraically closed, $m, n \geq 2$; $(m,n) = 1$ and $(mn)^{-1} \in k$. Grade $A$ by setting $\deg X_1 = \cdots =  \deg X_d = m$ and $\deg Y = n$. Note $T = k[X_1,\ldots, X_{d}]$ is a Noether normalization of $A$. Also note that $A$ is \CM.
By \ref{Gen} we also get that $A$ is a domain.

Let $\bu = u_1,\ldots, u_{d-1} \in T_m$ be a $T$-regular sequence. Then there exists
$v \in T_m$ such that $(\bu, v) = (X_1,\ldots, X_d)$.
Assume $$X_i = \sum_{j = 1}^{d-1}a_{ij}u_j + \beta_i v.$$
Then
\begin{align*}
A/(\bu)A &= \frac{k[\bu, v, Y]}{(Y^m + \sum_{i = 1}^{d} \left( \sum_{j = 1}^{d-1}a_{ij}u_j + \beta_i v \right)^n, \bu ) } \\
   &\cong \frac{k[ v, Y]}{(Y^m + \sum_{i=1}^{d} \beta_i^nv^n)} \\
   &\cong \frac{k[ v, Y]}{(Y^m + \left(\sum_{i=1}^{d} \beta_i^n \right)v^n)}.
\end{align*} 
Set $\beta = \sum_{i=1}^{d} \beta_i^n$. We have to consider two cases.

Case 1: $\beta = 0$. \\
Then $A/(\bu)A \cong k[v,Y]/(Y^m)$. Clearly this has only one minimal prime.

Case 2: $\beta \neq 0$. \\
Then $A/(\bu)A \cong k[v,Y]/(Y^m + \beta v^n)$ is a domain, see \ref{d1}. Clearly this has only one minimal prime. 
\end{example}

The second class of  examples of \CM \ domains satisfying our hypotheses is the following:
\begin{example}\label{ex2}
$A = k[X, Y, Z, W_3, W_4, \ldots, W_d]/(X^n + Y^m + X^aZ^b + \sum_{j = 3}^{d} W_j^n)$ where $k$ is algebraically closed, $m, n \geq 2$; $(m,n) = 1$, $a+b = n$ and $(mn)^{-1} \in k$. Also assume $a,b \geq 1$. Grade $A$ by setting $\deg X = \deg Z = \deg W_3 = \cdots = \deg W_d = m$, $\deg Y = n$.  Note $T = k[X, Z, W_3, \cdots, W_d]$ is a Noether normalization of $A$.  Also note that $A$ is \CM.
By \ref{Gen} we also get that $A$ is a domain.

Let $\bu = u_1,\ldots, u_{d-1} \in T_m$ be a $T$-regular sequence. Then there exists
$v \in T_m$ such that $(\bu, v) = (X, Z, W_3, \cdots, W_d)$.

Let
\begin{align*}
X &= \sum_{i = 1}^{d-1}a_i u_i  + bv, \\
Z  &= \sum_{i = 1}^{d-1}c_i u_i  + dv,\\
W_j  &= \sum_{i=1}^{d-1}e_{ij}u_i + f_jv; \ \text{for} \ j = 3,\ldots, d.
\end{align*}
Then by an argument similar to the previous example we get
\begin{align*}
A/(\bu)A  &\cong \frac{k[ v, Y]}{(Y^m +  b^n v^n + b^ad^b v^n + \sum_{j=3}^{d} f_j^nv^n)} \\
   &\cong \frac{k[ v, Y]}{(Y^m + \left( b^n  + b^ad^b +  \sum_{j=3}^{d} f_j^n  \right)v^n)}.
\end{align*} 
Set $\beta = b^n  + b^ad^b +  \sum_{j=3}^{d} f_j^n $.
We have to consider two cases.

Case 1: $\beta = 0$. \\
Then $A/(\bu)A \cong k[v,Y]/(Y^m)$. Clearly this has only one minimal prime.

Case 2: $\beta \neq 0$. \\
Then $A/(\bu)A \cong k[v,Y]/(Y^m + \beta v^n)$ is a domain, see \ref{d1}. Clearly this has only one minimal prime. 
\end{example}

\end{document}